\documentclass[a4paper,11pt]{article}
\usepackage{amssymb,amsthm,amsmath,latexsym}

\newtheorem{thm}{Theorem}[section]
\newtheorem{pro}[thm]{Proposition}
\newtheorem{lem}[thm]{Lemma}

\theoremstyle{definition}

\newtheorem{rem}[thm]{Remark}

\catcode`Ý=\active \defÝ{\. I} \catcode`ý=\active \defý{{\i }}
\catcode`ç=\active \defç{\c{c}} \catcode`Ü=\active \defÜ{\"{U}}

\frenchspacing
\date{}

\title{\normalsize\bf ON GROUPS WITH ALL SUBGROUPS SUBNORMAL OR SOLUBLE OF BOUNDED DERIVED LENGTH}

\author{\small{\textsc{Kývanç Ersoy\begin{footnote}{This study was carried out
during the visit of the first author to University of Salerno
which is supported by The Scientific and Technological Research
Council of Turkey (TÜBÝTAK) BÝDEB 2219 International Post Doctoral
Research Fellowship.  The first author thanks TÜBÝTAK
for the support.}\end{footnote}}}\\
\small{Department of Mathematics, Mimar Sinan Fine Arts University}\\
\small{Ýstanbul, 34427 - Turkey}\\
\small{E-mail: kivanc.ersoy@msgsu.edu.tr}\\
[10pt]
\small{\textsc{Antonio Tortora} and \textsc{Maria Tota}}\\
\small{Dipartimento di Matematica, Universit\`a di Salerno}\\
\small{Via Ponte don Melillo, 84084 - Fisciano (SA), Italy}\\
\small{E-mail: antortora@unisa.it, mtota@unisa.it}}

\begin{document}
\maketitle

\begin{abstract}
In this paper, we deal with locally graded groups whose subgroups are either subnormal or
 soluble of bounded derived length, say $d$. In particular,
we prove that every locally (soluble-by-finite) group with this property is either soluble
or an extension of a soluble group of derived length at most $d$
by a finite group, which fits between a minimal simple group and its automorphism group. We also classify all the finite non-abelian simple
groups whose proper subgroups are metabelian.\\

\noindent{\bf 2010 Mathematics Subject Classification:} 20F19; 20E32\\
{\bf Keywords:} locally (soluble-by-finite) group, subnormal subgroup, minimal simple group
\end{abstract}

\section{Introduction}

A well-known result, due to W. M$\ddot{\rm{o}}$hres (see \cite{Mo2}), states that a group
with all subgroups subnormal is soluble, while a result proved, separately, by C. Casolo (see \cite{Ca2}) and
H. Smith (see \cite{Sm4}) shows that such a group is nilpotent if it is also torsion-free. Later,
Smith generalized these results to groups in which every subgroup is either subnormal or nilpotent. More precisely,
he proved, in \cite{Sm2}, that a locally (soluble-by-finite) group with all subgroups subnormal or
nilpotent is soluble, and the same holds for a locally graded group whose non-nilpotent
subgroups are subnormal of bounded defect. Also, in both cases, the nilpotence follows if
the group is torsion-free (see \cite{Sm3}). Recall that a group is {\em locally graded} if every non-trivial
finitely generated subgroup has a non-trivial finite image. The class of
locally graded groups is rather wide and, in particular, it contains all locally (soluble-by-finite) groups.
This restriction is made in order to avoid Tarski monsters
(see \cite{Ol}) which show that the previous results are false without any finiteness condition.

In this paper, we are interested in studying locally graded groups with all subgroups subnormal or soluble.
The first problem that arises here is the presence of finite {\em minimal simple groups}, i.e. non-abelian simple
groups in which every proper subgroup is soluble. They have been completely classified by J.\,G. Thompson in \cite{Th}.
Using this classification, in Section 2, we get all the finite non-abelian simple groups having
each proper subgroup metabelian.

Another difficulty is due to infinite locally graded groups with all proper subgroups soluble.
Such groups are both hyperabelian (see \cite{FdGN}) and locally soluble (see \cite{DES}),
but it is still an open question whether they are soluble. However, there is a positive answer
if we bound the derived length of subgroups (see \cite{DE}). Motivated by this result,
we deal with locally graded groups whose subgroups are either subnormal or soluble of bounded derived length.
In our analysis, {\em almost minimal simple groups} show up. These are groups which fit between
a minimal simple group and its automorphism group.

In line with Smith's results \cite{Sm3,Sm2}, our main theorems follow. They will be
proved in Section 3.

\begin{thm}\label{d}
Let $G$ be a locally (soluble-by-finite) group and suppose that, for some positive integer
$d$, every subgroup of $G$ is either subnormal or soluble of derived length at most $d$.
Then either
\begin{itemize}
\item[$(i)$] $G$ is soluble, or
\item[$(ii)$] $G^{(r)}$ is finite for some integer $r$ and $G$ is an extension of a
soluble group of derived length at most $d$ by a finite almost minimal simple group.
\end{itemize}
\end{thm}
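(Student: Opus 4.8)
The strategy follows the pattern of Smith's theorems on groups with all subgroups subnormal or nilpotent \cite{Sm2,Sm3}: strip away the two extreme obstructions to solubility and show that what remains has precisely the shape in $(ii)$; the new feature is that minimal simple groups, absent from Smith's picture, now genuinely occur, entering through the finite almost minimal simple quotient. First, the routine reductions that dispose of case $(i)$. If every subgroup of $G$ is subnormal, then $G$ is soluble by M\"ohres's theorem \cite{Mo2}. If every proper subgroup of $G$ is soluble of derived length at most $d$, then $G$ is soluble by \cite{DE}, since $G$, being locally (soluble-by-finite), is locally graded. So I may assume henceforth that $G$ is not soluble, that it has a subgroup which is not subnormal (hence soluble of derived length at most $d$), and that it has a proper subgroup which is subnormal in $G$ but not soluble of derived length at most $d$; taking the last proper term of a shortest subnormal chain from this subgroup up to $G$ shows that $G$ has a proper \emph{normal} subgroup which is not soluble of derived length at most $d$.

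\emph{Structure of the soluble-by-finite case.} I claim: if $G$ is soluble-by-finite and not soluble, then $G/R$ is a finite almost minimal simple group, where $R$ is the soluble radical of $G$ (the unique largest soluble normal subgroup, which exists because $G$ is soluble-by-finite), and $R$ is soluble of derived length at most $d$. Put $X=G/R$: then $X\neq1$ is finite with trivial soluble radical, and every subgroup of $X$, being the image of a subgroup of $G$, is subnormal in $X$ or soluble of derived length at most $d$. Let $V$ be a minimal normal subgroup of $X$; then $V\cong T^{k}$ for some non-abelian simple group $T$ (an abelian $V$ would lie in $\mathrm{Sol}(X)=1$). A proper non-trivial subgroup of $T$ cannot be subnormal in $X$ --- the only subnormal subgroups of a simple group are $1$ and itself, and subnormality passes to intersections with subgroups --- hence is soluble of derived length at most $d$; so $T$ is a minimal simple group. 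If $k\geq2$, the diagonal of $T\times T\leq X$ is non-soluble and self-normalising in $T\times T$, hence not subnormal in $X$, a contradiction; so $V=T$ is simple. If $C_{X}(T)\neq1$ it contains a further minimal simple normal subgroup $T'$ of $X$, and then, for a proper non-trivial $U<T$, the subgroup $U\times T'$ of $X$ is non-soluble but not subnormal in $X$, since $[\,T\times T',\ U\times T',\dots,U\times T'\,]=T\times T'\not\leq U\times T'$; again a contradiction, so $C_{X}(T)=1$, and conjugation embeds $X$ into $\mathrm{Aut}(T)$ with $T$ mapping onto $\mathrm{Inn}(T)$, i.e. $X$ is a finite almost minimal simple group. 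Finally, choosing $g\in G$ with $gR$ a non-trivial element of the socle $T$ of $X$, the subgroup $R\langle g\rangle$ has $R\langle g\rangle/R=\langle gR\rangle$ cyclic, hence proper in $T$, so $R\langle g\rangle$ is not subnormal in $G$ (else $\langle gR\rangle$ would be a non-trivial subnormal subgroup of $X$ inside the simple group $T$, forcing it to be $1$ or $T$), whence $R\langle g\rangle$, and so its subgroup $R$, is soluble of derived length at most $d$.

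\emph{The crux: $G$ is soluble-by-finite.} This is where I expect the main difficulty. The finite case is covered by the paragraph above (a finite group is trivially soluble-by-finite), combined with Thompson's classification \cite{Th} and the analysis of Section~2 to pin down the minimal simple groups that can occur. For general $G$ I would argue locally: since $G$ is locally (soluble-by-finite) and not soluble, it has a finitely generated subgroup $F$ that is not soluble of derived length at most $d$, so $F$ is subnormal in $G$ and (being finitely generated and soluble-by-finite) has a non-abelian composition factor. The aim is to bound the ``non-soluble content'' of $G$: the finiteness of the defect of $F$ should keep its normal closure, and hence the non-soluble part of $G$, bounded, while M\"ohres's theorem and \cite{DE} forbid any infinite quotient of $G$ with trivial soluble radical --- such a quotient would again contain a subgroup that is neither subnormal nor soluble of derived length at most $d$, contrary to the choices made. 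Carried out carefully --- presumably by an induction on $d$ and on the subnormal structure, reducing to finitely generated subgroups where the finite case applies --- this should yield a soluble normal subgroup of $G$ of finite index.

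\emph{Finiteness of $G^{(r)}$.} Granting the above, $G$ is soluble-by-finite, $R:=\mathrm{Sol}(G)$ has derived length $e\leq d$, and $G/R$ is finite almost minimal simple; I would finish by induction on $e$. If $e=0$ then $G=G/R$ is finite. If $e\geq1$, put $A=R^{(e-1)}$, a non-trivial abelian normal subgroup of $G$; then $G/A$ satisfies the hypothesis, is non-soluble, and is soluble-by-finite with soluble radical $R/A$ of derived length $e-1$, so by induction $G^{(r_{0})}A/A$ is finite for some $r_{0}$. Hence $Y:=G^{(r_{0})}$ is abelian-by-finite (via $Y\cap A$), normal in $G$, still satisfies the hypothesis, and is not soluble. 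Along the derived series of $Y$, the image $Y^{(k)}(Y\cap A)/(Y\cap A)$ is eventually a fixed finite \emph{perfect} subgroup; once this and the descending chain $Y^{(k)}\cap A$ have stabilised, the term $W=Y^{(k)}$ is perfect and is an extension of an abelian group $D$ by a finite perfect group. If $D\leq Z(W)$, then $W$ is a perfect central extension of a finite perfect group, hence a homomorphic image of its (finite) universal central extension, so $W$ is finite. Otherwise the minimal simple section of $W/D$ acts non-trivially on $D$, and from such an action one constructs in $W$ a non-soluble subgroup that is not subnormal --- the analogue of a complement $D\rtimes(\text{minimal simple})$, its non-subnormality detected by the same commutator computation as in the structure paragraph --- contradicting the hypothesis. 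Hence $W$ is finite, $G^{(r_{0}+k)}=W$ is finite, and $G$ is of type $(ii)$.
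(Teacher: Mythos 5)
Your analysis of the soluble-by-finite case is correct and rather elegant: identifying the socle of $X=G/R$ as a single minimal simple group $T$ via the self-normalising diagonal, killing $C_X(T)$ with the commutator computation $[T\times T',U\times T',\ldots]=T\times T'$, and observing that $R\langle g\rangle$ cannot be subnormal (so that $R$ has derived length at most $d$) are all valid; the last point is a neat alternative to the paper's use of Lemma~\ref{G^r}. But the theorem is not about soluble-by-finite groups, and the paragraph you label ``the crux'' is exactly that: a statement of what must be proved, not a proof. Two of the tools you invoke there are not available. There is no bound on subnormal defects in this theorem (that is the hypothesis of Theorem~\ref{dn}), so ``the finiteness of the defect of $F$'' gives no uniform control of normal closures; and neither M\"ohres's theorem nor \cite{DE} ``forbids an infinite quotient with trivial soluble radical'' --- the result of \cite{DE} applies only to groups \emph{all} of whose proper subgroups are soluble of bounded derived length, a property such a quotient need not inherit. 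Note also that ``$G$ non-soluble implies $G$ soluble-by-finite'' is essentially the entire theorem: a locally soluble non-soluble group is never soluble-by-finite, so that case must be excluded separately, and your sketch does not address it.

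The missing step is the actual content of the paper's Section~3. There, M\"ohres's theorem yields Lemma~\ref{G^r} (if every subgroup containing $H$ is subnormal then $G^{(r)}\leq H$); applied to the terminal term $K=G^{(\alpha)}=G^{(\alpha+1)}$ of the transfinite derived series, this shows that either $K=1$, whence $G$ is locally soluble, or $K$ is a minimal non-soluble normal subgroup with $G/K$ soluble. The finite-$K$ case is essentially your finite analysis; the infinite-$K$ case is excluded by the delicate argument of Lemma~\ref{hyp}$(i)$, which reduces to a locally nilpotent $K$ and then uses M\"ohres's isolator lemma \cite{Mo} in the torsion-free case and \cite[Lemma 2.1]{ASS} in the locally finite case. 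The locally soluble case is then settled by Lemma~\ref{Sm} together with \cite[Lemma 2.1]{DE}, which make $G^{(s)}$ finite and perfect, hence trivial. None of this machinery appears in your proposal, and without it the argument does not close. Your final paragraph on the finiteness of $G^{(r)}$ also has a gap of its own: the descending chain $Y^{(k)}\cap A$ lies in a possibly infinite abelian group and need not stabilise, and the non-central case is only asserted (the central case does work, though more directly via Schur's theorem than via universal central extensions).
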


\begin{thm}\label{dn}
Let $G$ be a locally graded group and suppose that, for some positive integers $n$ and $d$,
every subgroup of $G$ is either subnormal of defect at most $n$ or soluble of derived length
at most $d$. Then either
\begin{itemize}
\item[$(i)$] $G$ is soluble of derived length not exceeding a function depending on $n$ and
$d$, or
\item[$(ii)$] $G^{(r)}$ is finite for some integer $r=r(n)$ and $G$ is an extension of a
soluble group of derived length at most $d$ by a finite almost minimal simple group.
\end{itemize}
\end{thm}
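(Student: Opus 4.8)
My plan is to derive Theorem~\ref{dn} from Theorem~\ref{d}, using the bound $n$ on the defects to sharpen both conclusions, after first bringing $G$ within the scope of Theorem~\ref{d}. Since Theorem~\ref{d} assumes the group to be locally (soluble-by-finite) whereas here $G$ is only locally graded, the first step is to check that the present hypotheses force $G$ to be locally (soluble-by-finite): one passes to a finitely generated subgroup $H$, observes that $H$ again satisfies the hypotheses, and argues — in the spirit of Smith's results \cite{Sm2} and using that the non-(soluble of derived length $d$) subgroups are subnormal of bounded defect — that $H$ is soluble-by-finite. After this, Theorem~\ref{d} applies (a subgroup that is subnormal of defect at most $n$ is, a fortiori, subnormal), and we are in one of two situations: either $G$ is soluble, or $G$ has a normal soluble subgroup $S$ with $d(S)\le d$, the quotient $G/S$ is a finite almost minimal simple group, and $G^{(r)}$ is finite for some $r$ that is not yet controlled.

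Suppose first that $G$ is soluble; we must bound $d(G)$ by a function of $n$ and $d$. Let $R$ be the subgroup generated by all subgroups of $G$ that are not subnormal of defect at most $n$; each of these is, by hypothesis, soluble of derived length at most $d$, and $R$ is characteristic in $G$. If $\bar H\le G/R$ has preimage $H$, then either $H=R$, or $H$ properly contains $R$ and so is not one of the generating subgroups of $R$, whence $H$ — and therefore $\bar H$ — is subnormal of defect at most $n$. Thus every subgroup of $G/R$ is subnormal of defect at most $n$, so by Roseblade's theorem $G/R$ is nilpotent of class bounded in terms of $n$, hence soluble of derived length at most some $c_{1}=c_{1}(n)$; consequently $G^{(c_{1})}\le R$ and it remains to bound $d(R)$. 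This is where the real work lies: $R$ inherits the hypotheses, but re-running the construction inside $R$ need not strictly descend (a subgroup of defect $n+1$ in $G$ may acquire defect at most $n$ in $R$), so one cannot merely iterate. Instead I would reduce to finitely generated (soluble) subgroups — it suffices to bound their derived lengths uniformly — and invoke the quantitative wreath-product/free-soluble machinery of M\"ohres and Smith \cite{Sm3,Sm2}: were $d(H)$ too large, one would find inside $H$ a section of derived length exceeding $d$ lying deep enough in a subnormal series that some of its subgroups have defect exceeding $n$, contradicting the hypothesis. This gives $d(G)\le c(n,d)$ for an explicit (iterated) function $c$, which is conclusion~$(i)$; I expect this bootstrapping from ``soluble'' to ``soluble of bounded derived length'' to be the main obstacle.

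Finally, suppose $G$ is as in~$(ii)$, and let us bound $r$. By Thompson's classification \cite{Th}, every minimal simple group $T$ has cyclic outer automorphism group; hence the finite almost minimal simple quotient $\bar G=G/S$ satisfies $\bar G/T$ cyclic, and since $T$ is perfect we get $\bar G^{(k)}=T$ for all $k\ge1$. Put $W=G'$ and $B=W\cap S$, so that $W\trianglelefteq G$, $d(B)\le d$, $W/B\cong T$ and $G^{(1+j)}=W^{(j)}$ for every $j\ge0$. As $T$ is minimal simple, a subgroup of $W$ is non-soluble precisely when it maps onto $T$; hence, fixing $L_{0}\le W$ with $L_{0}B/B=T$, for every $w\in W$ the subgroup $\langle L_{0},w\rangle$ is non-soluble, so subnormal of defect at most $n$ in $G$, and therefore in $W$. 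Thus $W$ is generated by subnormal subgroups of defect at most $n$, each of them an extension of a group of derived length at most $d$ by $T$, and a Wielandt--Roseblade type analysis of such a join should show that $W^{(h)}$ is finite for some $h=h(n)$; equivalently $G^{(r)}$ is finite with $r=r(n):=h+1$. Combined with the structural statement already provided by Theorem~\ref{d}, this is conclusion~$(ii)$.
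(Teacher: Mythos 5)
Your overall plan---reduce to the locally (soluble-by-finite) case, apply Theorem~\ref{d}, then use the bound $n$ to sharpen both conclusions---is exactly the paper's strategy (the reduction is the paper's Lemma~\ref{n}, which you only gesture at but which itself requires a real argument via Roseblade's theorem and the finite residual). However, both of your quantitative steps have genuine gaps, and you explicitly flag the first as unresolved. In case~$(i)$ your subgroup $R$, the join of all subgroups that fail to be subnormal of defect at most $n$, is a join of groups of derived length at most $d$, and such a join has no a priori bound on its derived length; the ``wreath-product/free-soluble machinery'' you invoke to bound $d(R)$ is not an argument, and as you yourself note the construction cannot be iterated. In case~$(ii)$ the ``Wielandt--Roseblade type analysis'' of a join of subnormal subgroups of $W=G'$ is likewise not carried out.

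The missing idea is a single, much simpler device, namely Lemma~\ref{G^r} combined with Remark~\ref{rdependn}: if \emph{every} subgroup of $G$ containing a fixed subgroup $H$ is subnormal of defect at most $n$, then $G^{(s)}\le H$ for some $s=s(n)$. One then only has to choose $H$ so that every overgroup of $H$ is forced out of the class ``soluble of derived length at most $d$''. In case~$(i)$, if $G$ is soluble of derived length $e>d$, take $H=G^{(e-(d+1))}$: this has derived length exactly $d+1$, so every subgroup containing it has derived length exceeding $d$ and is therefore subnormal of defect at most $n$; hence $G^{(s)}\le H$ and $G^{(s+d+1)}=1$. In case~$(ii)$, take $H=K=G^{(r)}$, which is finite and non-soluble: every overgroup of $K$ is non-soluble, hence subnormal of defect at most $n$, so $G^{(t)}\le K$ with $t=t(n)$, and $G^{(t)}$ is finite. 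This replaces both of your incomplete arguments and is where the bound ``depending on $n$ and $d$'' actually comes from.
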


\section{Minimal simple groups}

In this section we focus on locally graded minimal simple groups. By \cite[Lemma 2.4]{FdGN}
such groups are necessarily finite and they are known:

\begin{thm}[\cite{Th}, Corollary 1]\label{Th}
Every finite minimal simple group is isomorphic to one of the
following groups:
\begin{itemize}
\item[$(i)$] $PSL(2,2^p)$, where $p$ is any prime;
\item[$(ii)$] $PSL(2,3^p)$, where $p$ is any odd prime;
\item[$(iii)$] $PSL(2,p)$, where $p>3$ is any prime such that $p^{2}+1\equiv 0\; (mod\;5)$;
\item[$(iv)$] $PSL(3,3)$;
\item[$(v)$] $Sz(2^p)$, where $p$ is any odd prime.
\end{itemize}
\end{thm}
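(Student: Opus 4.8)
The plan is to derive this as a consequence of Thompson's classification of the simple $N$-groups --- the finite nonabelian simple groups all of whose local subgroups (i.e.\ normalizers of nontrivial $p$-subgroups) are solvable. Indeed, a finite minimal simple group $G$ has \emph{every} proper subgroup solvable, so in particular every $p$-local subgroup is solvable, and hence $G$ is a simple $N$-group. The overwhelmingly hard step is therefore Thompson's $N$-group theorem itself, established in the long series of papers ``Nonsolvable finite groups all of whose local subgroups are solvable''. This is a monumental piece of local analysis --- the structure and fusion of Sylow $2$-subgroups, signalizer functors and uniqueness theorems for maximal subgroups, and a lengthy separate treatment of the small-characteristic configurations --- and I would invoke it as a black box. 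Its output is that a simple $N$-group is isomorphic to $PSL(2,q)$ with $q\geq 4$, $Sz(2^{2m+1})$ with $m\geq 1$, $PSL(3,3)$, $PSU(3,3)$, $A_7$, or $M_{11}$.

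It then remains to sift this list for the groups that are genuinely minimal simple, i.e.\ have \emph{every} proper subgroup solvable and not merely the local ones. For the three small groups this is immediate: $A_7$ contains $A_6$ (hence $A_5$), while $M_{11}$ and $PSU(3,3)$ have the nonsolvable proper subgroups $A_6$ and $PSL(2,7)$ respectively, so none of them qualifies; on the other hand $|PSL(3,3)|=2^4\cdot 3^3\cdot 13$ is too small to admit any nonabelian simple proper subgroup, so $PSL(3,3)$ is minimal simple. For $PSL(2,q)$ with $q=\ell^n$ ($\ell$ prime) I would invoke Dickson's description of the subgroup lattice: a proper subgroup is solvable unless it is isomorphic to $A_5$ or to $PSL(2,\ell^m)$ (or $PGL(2,\ell^m)$) for a proper subfield $\mathbb{F}_{\ell^m}\subset\mathbb{F}_q$ with $\ell^m\geq 4$. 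Thus $PSL(2,q)$ is minimal simple exactly when $\mathbb{F}_q$ has no proper subfield of order $\geq 4$ and $A_5$ is not a \emph{proper} subgroup; the subfield condition forces $q$ to be a prime or of the form $2^p$ or $3^p$ with $p$ prime, and the classical criterion for the presence of an $A_5$-subgroup (possible precisely when $5\mid q$ or $q\equiv\pm1\pmod 5$) then excludes $q=3^2$, forces $p$ odd when $q=3^p$, and in the prime case $q=p>3$ essentially imposes $p\equiv\pm2\pmod 5$, equivalently $p^2+1\equiv 0\pmod 5$. Finally, by Suzuki's determination of the subgroups of $Sz(q)$ the only nonsolvable subgroups of $Sz(2^{2m+1})$ are the subfield subgroups $Sz(q_0)$, which are nonsolvable as soon as $q_0\geq 8$; hence $Sz(2^{2m+1})$ is minimal simple precisely when $2m+1$ is prime, i.e.\ $q=2^p$ with $p$ an odd prime. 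Collecting the survivors gives exactly the families (i)--(v).

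The only genuine obstacle is Thompson's $N$-group theorem; everything after it is bookkeeping against well-documented subgroup lattices (Dickson's theorem for $PSL(2,q)$, Suzuki's paper on $Sz(q)$, the ATLAS for the three small groups). The one spot in the sieving that needs a little care is the handful of small values $q\in\{4,5,7,8,9\}$ in the $PSL(2,q)$ analysis, where the generic statements about subfield subgroups and $A_5$-subgroups degenerate: one checks by hand that $PSL(2,4)\cong PSL(2,5)\cong A_5$, $PSL(2,7)\cong PSL(3,2)$ and $PSL(2,8)$ are minimal simple (the first of these accounting also for the value $p=5$ in the prime case, via $A_5\cong PSL(2,4)$), while $PSL(2,9)\cong A_6$ is not, since it contains $A_5$ --- consistent with the exclusion of the exponent $2$ in item (ii).
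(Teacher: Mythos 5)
This statement is quoted in the paper as Corollary~1 of Thompson's $N$-group papers and is not proved there; your sketch is the standard derivation of that corollary (minimal simple $\Rightarrow$ $N$-group, apply Thompson's classification, then sieve via Dickson's subgroup list for $PSL(2,q)$, Suzuki's for $Sz(q)$, and the known subgroup structure of the three sporadic cases), and it is essentially correct. The only imprecision is in the $PSL(3,3)$ step: ruling out a nonabelian simple \emph{subgroup} does not by itself rule out a nonsolvable proper subgroup (e.g.\ a perfect central extension such as $SL(2,5)$ contains no nonabelian simple subgroup), so you should instead argue via a nonabelian simple composition factor of a hypothetical nonsolvable proper subgroup, whose order would have to divide $|PSL(3,3)|=2^4\cdot 3^3\cdot 13$ --- which no nonabelian simple group's order does except $|PSL(3,3)|$ itself.
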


\vspace{0.3cm}

The table below, that will be useful later, shows some of the details concerning the outer
automorphism group of a finite minimal simple group $M$. By \cite[p. xv]{atlas}, $|Out\,(M)|=d\cdot f\cdot g$
where, $d$ is the order of the group of diagonal automorphisms,
$f$ is the order of the group of field automorphisms and $g$ is
the order of the group of graph automorphisms (modulo field automorphisms). For more details,
see \cite[Table 5, p. xvi]{atlas}.

\renewcommand{\baselinestretch}{1.2}\small

\begin{table}[h]
\centering
\begin{tabular}{|l|c|c|c|c|c|}

\hline

$M$ & $d$  & $f$ & $g$ & $|Out(M)|$ \\ \hline

$PSL(2,2^p)$ & $1$  & $p$ & $1$ & $p $ \\ \hline

$PSL(2,3^p)$, $p\geq 3$ & $2$  & $p$ & $1$ & $2p $ \\
\hline

$PSL(2,p)$, $p>3$ and $5|(p^{2}+1)$ & $2$ & $1$ & $1$ & $2$
\\ \hline

$ PSL(3,3)$ & $1$  & $1$ & $2$ & $2$ \\ \hline

$Sz(2^{p})$, $p\geq 3$ &  $1$ & $p$ & $1$ & $p$ \\ \hline

\end{tabular}
\caption{Outer automorphisms of a finite minimal
simple group\label{Table}  }
\end{table}

\renewcommand{\baselinestretch}{1}\normalsize
\vspace{0.2cm}

In light of Theorem \ref{Th}, we now classify all the finite non-abelian simple
groups whose proper subgroups are metabelian.

\begin{pro}\label{met}
Let $G$ be a finite non-abelian simple group with every proper subgroup metabelian.
Then $G$ is isomorphic to one of the following groups:
\begin{itemize}
\item[$(i)$] $PSL(2,2^p)$, where $p$ is any prime;
\item[$(ii)$] $PSL(2,3^p)$, where $p$ is any odd prime;
\item[$(iii)$] $PSL(2,p)$, where $p>3$ is any prime such that $p^{2}+1\equiv 0$ $(mod\;5)$
and $p^{2}-1\not\equiv 0$ $(mod\;16)$.
\end{itemize}
\end{pro}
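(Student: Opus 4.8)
The plan is to reduce at once to Thompson's list. Since a metabelian group is soluble, the hypothesis says that every proper subgroup of $G$ is soluble, so $G$ is a finite minimal simple group and, by Theorem~\ref{Th}, is one of the five groups listed there. It then remains to test each of these: as subgroups of a metabelian group are metabelian, $G$ has all proper subgroups metabelian if and only if all its \emph{maximal} subgroups are metabelian, and here I would use the explicit subgroup structure. For the groups $PSL(2,q)$ the tool is the classical description of the subgroups of $PSL(2,q)$ (Dickson): up to isomorphism these are cyclic groups, dihedral groups, subgroups of the Borel subgroup $E\rtimes C_{(q-1)/\gcd(2,q-1)}$ (with $E$ elementary abelian of order $q$), subfield subgroups $PSL(2,q_0)$ and $PGL(2,q_0)$, and copies of $A_4$, $S_4$ and $A_5$; I would also use the arithmetic conditions for the last three, namely $A_5\le PSL(2,q)$ iff $q\equiv 0,\pm 1\pmod 5$ and, for odd $q$, $S_4\le PSL(2,q)$ iff $q\equiv\pm 1\pmod 8$ (equivalently $16\mid q^2-1$), whereas for even $q$ no $S_4$ occurs since the Sylow $2$-subgroup is elementary abelian.

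Then I would run through the three $PSL(2,\cdot)$ cases. Cyclic and dihedral groups, subgroups of the Borel subgroup, and $A_4$ are metabelian; on the other hand $S_4$ has derived length $3$, while $A_5$ and $PSL(2,q_0)$, $PGL(2,q_0)$ with $q_0\ge 4$ are non-soluble and so already excluded by minimal simplicity. Hence a minimal simple $PSL(2,q)$ has all proper subgroups metabelian exactly when it contains no copy of $S_4$ (equivalently no $PGL(2,3)$, since $PGL(2,3)\cong S_4$); the only subfield subgroups that can then remain are $PSL(2,2)\cong S_3$ and $PSL(2,3)\cong A_4$, both metabelian. For $q=2^p$ this holds automatically, as $S_4$ is ruled out by the characteristic and over a subfield $PGL$ coincides with $PSL$; this yields case (i). For $q=3^p$ with $p$ an odd prime one checks $q\equiv 3\pmod 8$, so again no $S_4$, and a subfield $PGL(2,3^e)$ would need $2e\mid p$, which is impossible; this yields case (ii). For $q=p$ prime the sole obstruction is an $S_4$, present precisely when $16\mid p^2-1$; removing those from Thompson's list leaves exactly the $PSL(2,p)$ with $p>3$ prime, $5\mid p^2+1$ and $16\nmid p^2-1$, which is case (iii).

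Finally I would rule out $PSL(3,3)$ and $Sz(2^p)$ by exhibiting a non-metabelian proper subgroup in each. In $PSL(3,3)$, the stabiliser of a point of the projective plane over $\mathbb{F}_3$ is a maximal subgroup $P$ of order $|PSL(3,3)|/13=432$, with $O_3(P)$ elementary abelian of order $9$ and $P/O_3(P)\cong GL(2,3)$; since $GL(2,3)'=SL(2,3)$ and $SL(2,3)'\cong Q_8\ne 1$, the group $GL(2,3)$---and hence $P$---is not metabelian, so $PSL(3,3)$ is excluded (alternatively, quote its list of maximal subgroups from \cite{atlas}). In $Sz(q)$ with $q=2^p\ge 8$, the normaliser of a Sylow $2$-subgroup is a proper Frobenius subgroup $N=F\rtimes C_{q-1}$ whose kernel $F$ is a non-abelian Suzuki $2$-group with $F'=Z(F)\ne 1$; as $C_{q-1}$ acts on $F$ coprimely and without nontrivial fixed points, $[F,C_{q-1}]=F$, hence $N'=F$ and $N''=F'\ne 1$, so $N$ has derived length $3$ and $Sz(2^p)$ is excluded as well. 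Combining the cases gives precisely the asserted list. I expect the most delicate point to be the bookkeeping with Dickson's theorem: confirming that an $S_4$-subgroup is the only possible obstruction in the three $PSL(2,\cdot)$ cases, and that its occurrence is governed exactly by the congruence $q\equiv\pm1\pmod 8$ (in particular that no subfield $PGL(2,q_0)$ intervenes when $q=2^p$ or $q=3^p$).
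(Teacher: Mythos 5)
Your proposal is correct and follows the same overall strategy as the paper: reduce to Thompson's list via minimal simplicity and then test each of the five families. The differences are only in the tools used for the individual verifications. For $PSL(2,q)$ the paper short-circuits Dickson's theorem by citing Suzuki (\cite[Theorems 6.25, 6.26]{SuI}): $PSL(2,q)$ has a non-metabelian soluble subgroup iff it contains $S_4$ iff $q^2\equiv 1\pmod{16}$ --- exactly the criterion you extract by hand from the subgroup list, so your extra bookkeeping with subfield subgroups (correct as it is) is absorbed into that citation. For $PSL(3,3)$ the paper simply exhibits a copy of $SL(2,3)$, of derived length $3$, rather than your point stabiliser with quotient $GL(2,3)$; both work. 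For $Sz(2^p)$ both proofs use the same Frobenius group of order $q^2(q-1)$, but the non-metabelianity is argued differently: you compute $N'=F$ and $N''=F'=Z(F)\neq 1$ from the structure of the Suzuki $2$-group, while the paper argues that a metabelian $F$ would have an abelian normal subgroup forcing $F$ into a dihedral normaliser of order $2(q-1)$, a contradiction. Your derived-series computation is arguably more self-contained; the paper's route leans more heavily on the cited literature but is shorter on the page.
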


\begin{proof}
It is enough to analyze each case of Theorem \ref{Th}.

Let $q$ be a power of any prime. By \cite[Theorem 6.25]{SuI},
$PSL(2,q)$ contains a non-metabelian soluble subgroup if and only
if it has a subgroup isomorphic to $S_4$, the symmetric group of degree 4. Also, by
\cite[Theorem 6.26]{SuI}, this is equivalent to the condition
$q^{2}\equiv 1$ $(mod\;16)$. Hence, if $q=2^{p}$, then all
subgroups of $PSL(2,2^p)$ are metabelian. Suppose $q=3^{p}$, with
$p=2k+1$. Since $9^{2k}\equiv 1$ $(mod\;16)$, $S_4$ is never
contained in $PSL(2,3^p)$ and therefore all subgroups of
$PSL(2,3^p)$ are metabelian. Let $q=p>3$ with $p^{2}+1\equiv 0\;
(mod\;5)$. If $p^{2}-1\not\equiv 0$ $(mod\;16)$, all subgroups of
$PSL(2,p)$ are metabelian.

Now, we have to consider $PSL(3,3)$ and $Sz(2^{p})$, $p\geq 3$. But $PSL(3,3)$ has
 a subgroup isomorphic to $SL(2,3)$, which has derived length $3$;
  so we finish with $Sz(q)$ where $q=2^p$. By \cite[Theorem
9]{Suz}, $Sz(q)$ contains a Frobenius group $F$ of order
$q^{2}(q-1)$. Moreover, $Sz(q)$ has only one abelian subgroup of
order dividing $q^{2}(q-1)$, that is cyclic of order $q-1$, and
its normalizer is a dihedral group of order $2(q-1)$ (see
\cite{Suz}, p. 137). Hence, $F$ is not metabelian.
\end{proof}

\begin{rem} We can observe that every proper subgroup of a
minimal simple group has derived length at most $5$.
By Theorem \ref{Th} and Proposition \ref{met}, we need to consider the following cases:\\

Let $G=PSL(2,p)$, $p>3$, $p^{2}+1\equiv 0$ $(mod\;5)$ and $p^{2}-1 \equiv 0$ $(mod\;16)$.
Then, by \cite[Theorems 6.25, 6.26]{SuI}, $G$ has a subgroup isomorphic to $S_{4}$, which is soluble of
derived length $3$. This is also the unique non-metabelian subgroup of $G$.\\

Let $G=PSL(3,3)$ and $H$ be a proper subgroup of $G$. Since $H$ is soluble, it contains a
non-trivial normal elementary abelian subgroup. Thus, by \cite[Theorem 7.1]{Blo}, one
of the following holds:

\begin{itemize}
\item[$(1)$] $H$ has a cyclic normal subgroup of index at most $3$;

\item[$(2)$] $H$ has an abelian normal subgroup $K$ such that $H/K$ can be embedded into
 the symmetric group $S_{3}$;

\item[$(3)$] $H$ has a normal elementary abelian $3$-subgroup $K$ such that $H/K$ can be embedded into
$GL(2,3)$. Now, the derived length of $GL(2,3)$ is $4$ and so $H$ has derived length at most $5$.
Indeed, let $Z=Z(SL(3,3))$ and $H$ be the subgroup of $G$ given by
\[\{\left(%
\begin{array}{ccc}
  a & b & c \\
  d & e & f \\
  0 & 0 & g \\
\end{array}%
\right)Z \;\;|\; a,b,c,d,e,f,g \in \mathbb{F}_{3}, \;\;
(ae-bd)g=1\}.\]
Then
\[K=\{\left(%
\begin{array}{ccc}
  1 & 0 & c \\
  0 & 1 & f \\
  0 & 0 & 1 \\
\end{array}%
\right)Z \;\;|\; c,f \in \mathbb{F}_{3} \}\]
is an elementary abelian $3$-subgroup of $H$ such that $H/K \cong GL(2,3)$.
\end{itemize}
Therefore, every proper subgroup of $PSL(3,3)$ has derived length
at most $5$ and $PSL(3,3)$ contains a subgroup of derived length
$5$.\\

Let $G=Sz(2^{p})$ for $p\geq 3$. Then, by \cite[Theorem 4.1]{wilson}, any maximal subgroup of
$G$ has derived length at most $3$.

\end{rem}

\section{Main results}

We start with some preliminary lemmas.

\begin{lem}\label{G^r}
Let $H$ be a subgroup of a group $G$. If every subgroup containing $H$ is subnormal in
 $G$, then $G^{(r)}\leq H$ for some $r\geq 0$. In particular, $r=0$ if and only if $H=G$.
\end{lem}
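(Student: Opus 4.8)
The plan is to reduce the statement to M\"ohres' theorem \cite{Mo2} (a group with all subgroups subnormal is soluble), applied to the successive quotients of a subnormal series of $H$ in $G$. Observe first that $H$ is itself subnormal in $G$, being among the subgroups of $G$ that contain $H$; let $n$ denote its defect. If $n=0$ then $H=G$ and there is nothing to prove (take $r=0$), so assume $n\geq 1$ and fix a series $H=H_{0}\trianglelefteq H_{1}\trianglelefteq\cdots\trianglelefteq H_{n}=G$.

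The heart of the argument is the claim that, for each $i\in\{0,\dots,n-1\}$, every subgroup of the section $H_{i+1}/H_{i}$ is subnormal. Such a subgroup is $X/H_{i}$ for some $H_{i}\leq X\leq H_{i+1}$; since $X\supseteq H_{i}\supseteq H$, the hypothesis says $X$ is subnormal in $G$, and then $X$ is subnormal in $H_{i+1}$ because subnormality passes to intermediate overgroups (intersect a subnormal series from $X$ to $G$ with $H_{i+1}$). Hence $X/H_{i}$ is subnormal in $H_{i+1}/H_{i}$, proving the claim. By M\"ohres' theorem each $H_{i+1}/H_{i}$ is therefore soluble; writing $\ell_{i}$ for its derived length we get $H_{i+1}^{(\ell_{i})}\leq H_{i}$. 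Putting $r=\ell_{0}+\ell_{1}+\cdots+\ell_{n-1}$ and arguing by descending induction on $i$ — at each stage passing from $G^{(a)}\leq H_{i+1}$ to $G^{(a+\ell_{i})}=(G^{(a)})^{(\ell_{i})}\leq H_{i+1}^{(\ell_{i})}\leq H_{i}$ — yields $G^{(\ell_{i}+\cdots+\ell_{n-1})}\leq H_{i}$ for every $i$; taking $i=0$ gives $G^{(r)}\leq H$.

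For the last assertion: if $r=0$ is admissible then $G=G^{(0)}\leq H$, so $H=G$; conversely, if $H=G$ then plainly $r=0$ works. I expect the only genuine obstacle to be spotting that the hypothesis forces all subgroups of each section $H_{i+1}/H_{i}$ to be subnormal — which rests on the elementary fact that subnormality is inherited by intermediate overgroups — after which M\"ohres' theorem does everything and no computation is required. One should simply be aware that the $\ell_{i}$, and hence $r$, carry no bound in terms of $n$ alone; this causes no harm since only the existence of some $r$ is claimed.
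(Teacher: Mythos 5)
Your proof is correct and follows essentially the same route as the paper: note that $H$ is itself subnormal, apply M\"ohres' theorem to each factor of a subnormal series from $H$ to $G$ (using that subnormality is inherited by intermediate subgroups so that every subgroup of each factor is subnormal), and then descend through the series accumulating derived lengths. The paper phrases the last step as refining to an abelian series of length $r$ and quoting $G^{(i)}\leq H_{r-i}$, which is exactly your summation $r=\ell_0+\cdots+\ell_{n-1}$; you also usefully make explicit the intermediate-overgroup argument that the paper leaves implicit.
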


\begin{proof}
We may assume $H<G$. Then there exists a series from $H$ to $G$, and by \cite[Theorem 7]{Mo2},
each factor is soluble. Hence, we have an abelian series from $H$ to $G$,
say $H=H_0\lhd H_1\lhd\ldots\lhd H_{r}=G$. As $G^{(i)}\leq H_{r-i}$, for all $i\geq 0$,
we get $G^{(r)}\leq H$.
\end{proof}

\begin{rem}\label{rdependn} In the previous lemma, if we also assume that the subgroups
containing $H$ are subnormal in $G$ of defect at most $n\geq 1$, then by
\cite[12.2.8]{LR}, $r$ depends on $n$. See also \cite{Ca}.
\end{rem}

\begin{lem}\label{hyp}
Let $G$ be a locally graded group with all subgroups subnormal or soluble, and suppose that
$N$ is a minimal non-soluble normal subgroup of~$G$.
\begin{itemize}
\item[$(i)$] If $N$ is infinite, then $G$ is hyperabelian.
\item[$(ii)$] If $N$ is finite, then $G$ is an extension of a soluble group by a finite
 almost minimal simple group.
\end{itemize}
\end{lem}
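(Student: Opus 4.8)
The plan is to analyze the two cases separately, in each case exploiting the dichotomy ``subnormal or soluble'' applied to subgroups of $N$ together with the minimality of $N$ as a non-soluble normal subgroup of $G$. The first observation common to both parts is that $N$ itself cannot be soluble, so $N$ contains a subgroup that is not soluble, and every such subgroup must be subnormal in $G$; in particular $N$ cannot have all its subgroups soluble without being soluble itself, and by minimality every proper $G$-invariant subgroup of $N$ is soluble.

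For part $(i)$, suppose $N$ is infinite. I would first argue that $N$ is a minimal non-soluble group in its own right: any proper normal subgroup of $N$ that is also normal in $G$ is soluble by minimality, and one upgrades this to all proper normal subgroups of $N$ by passing to the normal closure in $G$ and using that a normal subgroup of $N$ which is soluble has soluble $G$-closure only if\ldots --- more carefully, the cleanest route is to note that since $N$ is not soluble it contains a non-soluble, hence subnormal (in $G$, so subnormal in $N$) subgroup, and then invoke the structure of infinite locally graded groups all of whose proper subgroups are soluble: by \cite{FdGN} such groups are hyperabelian. So the real task is to show that being infinite forces $N$ (or a suitable section) into the ``all proper subgroups soluble'' regime and then to transfer hyperabelianity from $N$ up to $G$. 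For the transfer: $N$ hyperabelian gives a non-trivial abelian normal subgroup $A$ of $N$ that can be taken characteristic in $N$, hence normal in $G$; then $G/A$ has all subgroups subnormal or soluble and a smaller (in an appropriate sense) minimal non-soluble normal subgroup, so one sets up an induction/transfinite argument building an ascending abelian $G$-invariant series. The point that needs care is that $N/A$ need not be finite or even still minimal non-soluble, so I would instead argue directly: the set of all $G$-invariant subgroups $X$ with $X$ and $G/X$ appropriately controlled, ordered by inclusion, and show the ascending union hits $G$; equivalently, show $G$ has no non-trivial perfect locally graded section by using M\"ohres's theorem (all subgroups subnormal $\Rightarrow$ soluble) on the subnormal part and \cite{FdGN} on the soluble-subgroup part.

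For part $(ii)$, suppose $N$ is finite. By minimality every proper $G$-invariant subgroup of $N$ is soluble; I would show $N$ is characteristically simple, hence a direct product of copies of a finite simple group $S$. If $N$ were a product of at least two copies, a diagonal-type proper normal subgroup argument (or just the fact that a single factor is normal in $N$ but the product of the others is too, and both are proper, hence soluble, forcing $N$ soluble) shows $N=S$ is simple non-abelian. Next, every proper subgroup of $S$ is non-subnormal in $S$ (else $S$ would be soluble by M\"ohres again applied inside the finite group, or simply: a proper subnormal subgroup of a simple group is trivial), so the hypothesis forces every proper subgroup of $S$ to be soluble; thus $S$ is a minimal simple group. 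Finally $C_G(N)$ is a normal subgroup with $C_G(N)\cap N=1$, so $C_G(N)$ is soluble (it cannot be non-soluble: if it were it would contain a non-soluble subnormal-in-$G$ subgroup $T$ with $[T,N]=1$, and then $TN/$something\ldots the clean statement is that $C_G(N)N/C_G(N)\cong N$ is a composition factor, so if $C_G(N)$ is non-soluble then $G$ has a normal subgroup strictly between $1$ and $N$ up to the soluble radical, contradicting minimality after quotienting); hence $G/C_G(N)$ embeds in $\mathrm{Aut}(S)$ with image containing $\mathrm{Inn}(S)$, i.e.\ $G/C_G(N)$ is almost minimal simple, and $G$ is an extension of the soluble group $C_G(N)$ by this finite almost minimal simple group.

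The main obstacle I anticipate is part $(i)$: making the reduction to ``$N$ has all proper subgroups soluble'' fully rigorous when $N$ is infinite, and then the transfer of hyperabelianity from $N$ to $G$ in the presence of the subnormal subgroups --- one must rule out that the subnormal subgroups of $G$ reintroduce non-hyperabelian behaviour above $N$, which should follow from M\"ohres's theorem applied to the subgroup generated by all subnormal subgroups, but getting the bookkeeping of the ascending series right (and handling the limit ordinals) is the delicate part. Part $(ii)$ is essentially a finite-group argument and should be routine once the characteristic simplicity of $N$ and the solubility of $C_G(N)$ are pinned down.
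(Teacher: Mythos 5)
Your part $(ii)$ is close in spirit to the paper's argument but has two concrete gaps. First, minimality of $N$ only makes the proper \emph{$G$-invariant} subgroups of $N$ soluble; it does not make $N$ characteristically simple. For instance $N=SL(2,5)$ (say with $G=N$) is a minimal non-soluble normal subgroup whose only proper normal subgroups are trivial or central, yet it is not a direct product of simple groups. The paper first factors out the soluble radical $S$ of $N$ (characteristic in $N$, hence normal in $G$); only then is $N$ a minimal normal subgroup of $G$. Moreover the reduction from a direct product to a single simple factor cannot be done by your ``the complementary factor is proper normal, hence soluble'' (a single factor need not be $G$-invariant): the paper instead notes that every subgroup containing a simple factor $A$ is non-soluble, hence subnormal, so Lemma \ref{G^r} gives $G^{(r)}\leq A$ and forces $A$ to equal the minimal normal subgroup. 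Second, your argument that $C_G(N)$ is soluble does not work: minimality of $N$ says nothing about normal subgroups of $G$ not contained in $N$, so a non-soluble $C_G(N)$ contradicts nothing. The missing observation --- recorded at the very start of the paper's proof and used in both parts --- is that every subgroup of $G/N$ is subnormal (any subgroup containing $N$ is non-soluble), so $G/N$ is soluble by M\"ohres, and then $C_G(N)\cong C_G(N)N/N$ is soluble because $C_G(N)\cap N=1$.

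Part $(i)$ is where the real content of the lemma lies, and your plan does not contain a proof of it. The paper does \emph{not} reduce to ``all proper subgroups of $N$ are soluble'' (which is false in general); it sets $K=\langle K_i: K_i<N,\ K_i\lhd G\rangle$. If $K=N$, then $N$ has an ascending $G$-invariant abelian series and $G$ is hyperabelian (again using that $G/N$ is soluble); if $K<N$, then $K$ is soluble, $G/K$ is still locally graded by \cite{LMS}, and one reduces to the case $K=1$, i.e.\ $N$ has no non-trivial proper $G$-invariant subgroup. Then $N=A^G$ for a non-trivial abelian normal subgroup $A$ of $N$ (supplied by the hyperabelianity of $N$ from \cite{FdGN}), so $N$ is a product of normal abelian subgroups, hence locally nilpotent, and its torsion subgroup is $1$ or $N$; the torsion-free case is excluded via M\"ohres's isolator lemma and the torsion case via \cite[Lemma 2.1]{ASS}, which produces a proper non-trivial characteristic subgroup of $N$, a contradiction. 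None of this machinery (the reduction to $K=1$, local nilpotence, isolators, the Ar{\i}kan--Sezer--Smith lemma) appears in your sketch, and your proposed transfinite construction of an abelian series leaves exactly the hard step open: producing a non-trivial abelian $G$-invariant subgroup in a quotient in which the image of $N$ has no proper non-trivial $G$-invariant subgroup at all. You correctly flag this as ``the delicate part,'' but it is essentially the whole proof.
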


\begin{proof}
First, notice that every subgroup of $G/N$ is subnormal, so that $G/N$ is soluble,
by \cite[Theorem 7]{Mo2}.\\

$(i)$ By \cite[Lemma 2.4]{FdGN}, $N$ is hyperabelian. Let
$K=\langle K_i: K_i<N$, $K_i\lhd G\rangle$. If $K=N$, since each
$K_i$ is soluble, $N$ has a $G$-invariant ascending abelian
series. Hence $G$ is hyperabelian, since $G/N$ is a soluble group.
Now, consider $K<N$ and assume for a contradiction that $G$ is not
hyperabelian. Then $K$ is soluble and so by \cite[Corollary]{LMS}
$G/K$ is locally graded. Moreover, $G/K$ is not
 hyperabelian and its normal subgroup $N/K$ is minimal non-soluble. However, $N/K$ is hyperabelian
  and thus $N/K$ is also infinite. We can therefore restrict to the case $K=1$.

Let $A$ be a non-trivial normal abelian subgroup of $N$. Then
$N=A^G$, so that $N$ is the product of normal abelian subgroups.
This implies that $N$ is locally nilpotent. If $T$ is its
 torsion subgroup, we have either $T=1$ or $T=N$. Let $T=1$. As solubility is a countably
 recognizable property, we have that $N$ is countable. It is also locally nilpotent and
 torsion-free. Then, by \cite[Lemma 2]{Mo}, there exists $M<N$ such that the isolator $I_N(M)$
 equals $N$. Also, $I_N(M)^{(i)}\leq I_N(M^{(i)})$ for all $i\geq 0$ (see, for instance,
 \cite[2.3.9]{LR}). As $M$ is soluble, so is $N$, a contradiction.
Assume $T=N$. Then $N$ is a locally finite $p$-group. Clearly $Z(N)=1$ and so we may apply
\cite[Lemma 2.1]{ASS} to $N$. It follows that there exists $m>0$, such that
$R=\langle Z(H):H\lhd N, d(H)>m\rangle$ is a proper subgroup of $N$, where $d(H)$ denotes
the derived length of $H$. On the other hand, $N$ has a finitely generated soluble subgroup of derived
length greater than $m$. This means that there is a subgroup $L$ of $N$ generated by finitely many
abelian normal subgroups that is necessarily nilpotent but of derived length $>m$. The set $J$ of
all such subgroups $L$ is invariant under $Aut\,(N)$. So the subgroup
$\bar{R}=\langle Z(L):L\in J\rangle$ is characteristic in $N$ and normal in $G$. Furthermore,
 $\bar{R}\leq R<N$. Thus $\bar{R}=1$ and this is a contradiction.\\

$(ii)$ Let $S$ be the soluble radical of $N$. Surely $S$ is a characteristic subgroup of
$N$ and so it is normal in $G$. Without loss of generality,
assume $S=1$. Then $N$ is a finite non-abelian simple group, hence
$C_{G}(N)\cap N$ is trivial. This gives that $C_{G}(N)$ is
soluble. Since $G/C_G(N)$ embeds in $Aut(N)$, we get that $G$ is
soluble-by-finite. This also implies that the soluble radical of
$G$ is soluble. Suppose it is trivial, so that $G$ is finite.

Let $M$ be a minimal normal subgroup of $G$. Then $M$ is the
direct product of copies of a non-abelian simple group $A$.
Obviously, $A$ is subnormal in $G$ and so, by Lemma \ref{G^r}, we
have $G^{(r)}\leq A$ for some $r>0$. Thus $A=M$. It follows that
$M$ is a minimal simple group and, by M$\ddot{\rm{o}}$hres' result
(see \cite{Mo2} or \cite[12.2.1]{LR}), $G/M$
is soluble. Moreover $C_{G}(M)\cap M=1$, so that $C_{G}(M)$ is a
normal soluble subgroup of $G$. By our assumption, $C_{G}(M)=1$
and this gives $G\lesssim Aut\,(M)$.
\end{proof}

\begin{lem}\label{SF}
Let $G$ be a locally (soluble-by-finite) group with all subgroups subnormal or soluble. Then either
\begin{itemize}
\item[$(i)$] $G$ is locally soluble, or
\item[$(ii)$] $G^{(r)}$ is finite for some integer $r$ and $G$ is an extension of a soluble
group by a finite almost minimal simple group.
\end{itemize}
\end{lem}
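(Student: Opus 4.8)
The plan is to reduce Lemma~\ref{SF} to the dichotomy provided by Lemma~\ref{hyp}. First I would observe that if $G$ has no non-soluble normal subgroup at all, then, since $G$ is locally (soluble-by-finite), every finitely generated subgroup is soluble-by-finite, and one argues that $G$ is locally soluble (alternatively, if every subgroup is subnormal then $G$ is soluble by M\"ohres' theorem \cite{Mo2}, giving case $(i)$ trivially). So we may assume $G$ possesses a non-soluble normal subgroup, and, using that solubility is a countably recognizable property together with a standard minimal-condition argument on the family of normal subgroups, we may pick a \emph{minimal} non-soluble normal subgroup $N$ of $G$ — here one has to be slightly careful, intersecting down a chain of non-soluble normal subgroups and noting that the intersection of a descending chain of non-soluble normal subgroups is again non-soluble (this is where countable recognizability of solubility enters, as in the proof of Lemma~\ref{hyp}).

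With such an $N$ in hand, Lemma~\ref{hyp} applies and splits into two cases according to whether $N$ is infinite or finite. If $N$ is finite, Lemma~\ref{hyp}$(ii)$ immediately gives that $G$ is an extension of a soluble group by a finite almost minimal simple group, and then Lemma~\ref{G^r} (applied with $H$ the soluble normal subgroup, every subgroup containing which is subnormal because the quotient — being a finite simple-by-nothing extension, more precisely having all subgroups subnormal modulo the soluble part) yields $G^{(r)}$ finite for some $r$; this puts us in case $(ii)$ of the Lemma. So the real content is to rule out, or rather to absorb into case $(i)$, the situation where $N$ is infinite: then Lemma~\ref{hyp}$(i)$ tells us $G$ is hyperabelian, and a hyperabelian group is in particular locally soluble, landing us in case $(i)$.

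The main obstacle I anticipate is the passage from ``$G$ is hyperabelian'' (or more generally from the structural conclusions of Lemma~\ref{hyp}) back to the clean statement of the Lemma, together with producing the minimal non-soluble normal subgroup in the first place. Concretely: (a) justifying that a locally (soluble-by-finite) hyperabelian group is locally soluble — this should follow since in a hyperabelian group every finitely generated subgroup has a non-trivial abelian normal-in-$G$ intersection to quotient by, but one must make sure the induction terminates and that one does not need the full strength of local solubility of $N$, only of $G$; (b) the existence of a minimal non-soluble normal subgroup, which requires the descending-chain argument via countable recognizability sketched above rather than a naive minimal-condition appeal, since $G$ need not satisfy any chain condition. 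I would also want to double-check that when $N$ is finite the hypotheses really do let me invoke Lemma~\ref{G^r} to get \emph{finiteness} of some $G^{(r)}$ and not merely solubility-by-finiteness: here one uses that $G/N$-type quotients have all subgroups subnormal so that, after modding out the soluble radical, the relevant section is finite, and then $G^{(r)}$ sits inside a finite group.

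Finally I would assemble the two cases into the stated dichotomy, noting that they are genuinely exclusive in the conclusion (a locally soluble group has no finite non-trivial $G^{(r)}$ quotient that is almost minimal simple, and conversely) but that for the proof it suffices to show each $G$ falls into at least one of $(i)$, $(ii)$; the argument above does exactly that, with the infinite-$N$ and no-non-soluble-normal-subgroup cases feeding into $(i)$ and the finite-$N$ case feeding into $(ii)$.
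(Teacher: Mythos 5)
Your overall plan --- reduce to Lemma \ref{hyp} by producing a minimal non-soluble normal subgroup $N$ and then split on whether $N$ is finite or infinite --- has the right shape, but the two steps you yourself flag as delicate are exactly where the argument breaks. First, your construction of $N$ does not work. A minimal element of the poset of non-soluble normal subgroups is not what Lemma \ref{hyp} needs: there $N$ must be a normal subgroup \emph{all of whose proper subgroups} are soluble. Moreover, even the weaker minimal element need not be obtainable by your chain argument, because the intersection of a descending chain of non-soluble normal subgroups can perfectly well be soluble, indeed trivial (take $H_k=\prod_{n\geq k}S_n$ with $S_n$ soluble of derived length $n$ inside a suitable direct product); countable recognizability of solubility concerns subgroups, not descending intersections, and does not rescue this. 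The paper's key idea, absent from your proposal, is to take $K=G^{(\alpha)}=G^{(\alpha+1)}$, the perfect term of the transfinite derived series. If $K=1$, the descending abelian series together with local (solubility-by-finiteness) gives local solubility. If $K\neq 1$ and $H<K$ is a non-soluble proper subgroup, then every subgroup containing $H$ is subnormal by the hypothesis of the lemma, so Lemma \ref{G^r} gives $G^{(r)}\leq H<K\leq G^{(r)}$, a contradiction; hence $K$ is a perfect normal subgroup with all proper subgroups soluble, $G/K$ is soluble so $\alpha$ is finite, and Lemma \ref{hyp} applies with $N=K$.

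Second, your derivation of ``$G^{(r)}$ is finite'' in case $(ii)$ is not valid. You cannot apply Lemma \ref{G^r} with $H=S$ the soluble kernel: the subgroups of $G$ containing $S$ correspond to the subgroups of the finite almost minimal simple group $G/S$, most of which are soluble and not subnormal (e.g.\ the preimage of a maximal soluble subgroup), so the hypothesis of Lemma \ref{G^r} fails; and even if it held, its conclusion would be $G^{(r)}\leq S$, i.e.\ $G$ soluble, which is not what is wanted. In the paper the finiteness of $G^{(r)}$ is automatic, because the finite minimal non-soluble normal subgroup handed to Lemma \ref{hyp} is $K=G^{(\alpha)}$ itself with $\alpha$ finite. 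Finally, a smaller but real slip: ``a hyperabelian group is in particular locally soluble'' is false as a general statement; you do need (and later correctly note that you need) the locally (soluble-by-finite) hypothesis here, via the observation that finite quotients of hyperabelian groups are soluble and that subgroups of hyperabelian groups are hyperabelian.
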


\begin{proof}
Let $K=G^{(\alpha)}=G^{(\alpha+1)}$, a
perfect group. If $K=1$, then $G$ has a descending normal series with abelian factors. Since $G$ is locally (soluble-by-finite),
it follows that $G$ is locally soluble. Suppose $K\neq 1$ and let $H$ be a proper subgroup of $K$ that is not
soluble. Then all subgroups containing $H$ are subnormal in $G$ and, by Lemma 3.1, there is an integer $r$ such that $G^{(r)}\leq H$,
a contradiction. Thus every proper subgroup of $K$ is soluble and $K$, being perfect, is
a minimal non-soluble group. Furthermore $G/K$ is soluble, so that $\alpha$ is
finite. The claim is now a consequence of Lemma \ref{hyp}.
\end{proof}

Notice that, proving $(ii)$ of Lemma \ref{SF}, we have that the
derived series of $G$ ends in finitely many steps. The next lemma,
which follows from \cite[Proposition 1]{Sm} together with
\cite[12.2.6]{LR}, shows that this also happens when $G$ is
locally soluble.

\begin{lem}\label{Sm}
Let $\mathfrak{X}=\bigcup_{i\in \mathbb{N}} \mathfrak{X}_i$ be a class of groups, where each class
$\mathfrak{X}_i$ is closed under taking subgroups and direct limits, and
 $\mathfrak{X}_i\subseteq \mathfrak{X}_{i+1}$ for all $i$. Let G be a group
 with all subgroups subnormal or in $\mathfrak{X}$, and suppose that $G\notin \mathfrak{X}$.
 If $G$ is locally soluble, then $G^{(r)}=G^{(r+1)}$ for some integer $r$.
\end{lem}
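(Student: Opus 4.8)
The plan is to apply \cite[Proposition 1]{Sm} to the group $G$ in order to produce a bound on the derived length of certain subgroups, and then to combine this with the stability property \cite[12.2.6]{LR} of the derived series to force it to terminate. First I would recall the content of \cite[Proposition 1]{Sm}: for a group in which every subgroup is subnormal or lies in a suitable class $\mathfrak{X}$, and which itself does not lie in $\mathfrak{X}$, one obtains a subgroup $H$ (typically a finitely generated one realizing membership in some $\mathfrak{X}_i$) together with control over the subnormal defects or the structure below $H$. Since here $G$ is locally soluble and $G\notin\mathfrak{X}$, there is some finitely generated subgroup $F\leq G$ with $F\notin\mathfrak{X}$; because the $\mathfrak{X}_i$ exhaust $\mathfrak{X}$ and are subgroup-closed, and $F$ is soluble, $F$ has some finite derived length $d$, yet $F\notin\mathfrak{X}_d$ would be impossible unless the obstruction is genuine — so the role of Smith's Proposition is precisely to convert the hypothesis ``$G\notin\mathfrak{X}$'' into a concrete non-$\mathfrak{X}$ subgroup all of whose over-subgroups in $G$ are subnormal.

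Next I would argue that every subgroup $L$ with $F\leq L\leq G$ is subnormal in $G$: indeed if some such $L$ were in $\mathfrak{X}$ then, since $\mathfrak{X}$ is subgroup-closed (each $\mathfrak{X}_i$ is, and $\mathfrak{X}=\bigcup\mathfrak{X}_i$), we would get $F\in\mathfrak{X}$, contrary to the choice of $F$. Hence $F$ satisfies the hypothesis of Lemma \ref{G^r}, and so $G^{(r)}\leq F$ for some $r\geq 0$. Now $G^{(r)}$ is a subgroup of the soluble group $F$, hence $G^{(r)}$ is itself soluble, say of derived length $s$. Therefore $G^{(r+s)}=1$, and in particular $G^{(r+s)}=G^{(r+s+1)}$, which gives the conclusion with the integer $r+s$ in place of $r$.

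The only subtlety — and the point where I would be most careful — is the precise form in which \cite[Proposition 1]{Sm} delivers the subgroup $F$ and the guarantee that $F$ is not in $\mathfrak{X}$ while sitting below only subnormal subgroups; the reference to \cite[12.2.6]{LR} is presumably invoked at this stage to handle the interaction between local solubility and the subnormality of the $F$-over-subgroups (for instance, to ensure that a locally soluble group in which a non-$\mathfrak{X}$ finitely generated subgroup has all over-subgroups subnormal cannot have an infinitely descending derived series). Modulo citing those two results correctly, the argument is exactly: extract a bad finitely generated subgroup $F\notin\mathfrak{X}$, observe all subgroups above $F$ are subnormal, apply Lemma \ref{G^r} to get $G^{(r)}\leq F$, and conclude solubility of $G^{(r)}$ and hence stabilization of the derived series. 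I expect no further obstacle beyond faithfully transcribing the hypotheses of the cited propositions.
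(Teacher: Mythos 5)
Your argument breaks down at its very first step: the claim that $G\notin\mathfrak{X}$ forces the existence of a \emph{finitely generated} subgroup $F\leq G$ with $F\notin\mathfrak{X}$. The hypothesis only says that each individual class $\mathfrak{X}_i$ is closed under direct limits, not that the union $\mathfrak{X}=\bigcup_i\mathfrak{X}_i$ is. From $G\notin\mathfrak{X}$ one may conclude that for each fixed $i$ some finitely generated subgroup lies outside $\mathfrak{X}_i$, but these witnesses can vary with $i$, so no single finitely generated subgroup need lie outside all of the $\mathfrak{X}_i$. In the one application the paper makes of this lemma (in the proof of Theorem \ref{d}), $\mathfrak{X}_i$ is the class of soluble groups of derived length at most $i$, so $\mathfrak{X}$ is the class of all soluble groups and $G$ is locally soluble but not soluble: there \emph{every} finitely generated subgroup of $G$ lies in $\mathfrak{X}$, and your $F$ simply does not exist. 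A second warning sign is your conclusion $G^{(r+s)}=1$, i.e.\ that $G$ is soluble. That is strictly stronger than the asserted conclusion $G^{(r)}=G^{(r+1)}$, which deliberately allows the derived series to stabilize at a non-trivial perfect term; if solubility really followed, the proof of Theorem \ref{d} would end immediately, whereas the paper goes on to analyse a non-soluble perfect subgroup $G^{(s)}$ and rules it out by a separate finiteness argument.

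The portion of your argument that is sound --- if $F\notin\mathfrak{X}$ then every subgroup containing $F$ lies outside the subgroup-closed class $\mathfrak{X}$, hence is subnormal, hence $G^{(r)}\leq F$ by Lemma \ref{G^r} --- cannot carry the proof on its own, because a non-$\mathfrak{X}$ subgroup, when it exists at all, need not be finitely generated and therefore need not be soluble. The paper's actual route is different: it invokes \cite[Proposition 1]{Sm}, whose role under precisely these hypotheses on $\mathfrak{X}$ (and this is where the direct-limit closure of the individual $\mathfrak{X}_i$ is genuinely used) is to produce a uniform bound $n$ on the subnormal defects of the subgroups of $G$ not in $\mathfrak{X}$, and then \cite[12.2.6]{LR} to convert that bounded defect into termination of the derived series after boundedly many steps (compare Remark \ref{rdependn}, where the bounded-defect version of Lemma \ref{G^r} is cited). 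To repair your proposal you would need to replace the fictitious finitely generated $F$ by this bounded-defect mechanism.
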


Now, we can prove Theorem \ref{d}.

\begin{proof}[{\bf Proof of Theorem \ref{d}}]
By Lemma \ref{SF}, $G$ is either locally soluble, or $G^{(r)}$ is
finite for some integer $r$ and $G$ is an extension of a soluble
group $S$ by a finite almost minimal simple group. If $S$ is not soluble of
derived length at most $d$ then, by Lemma \ref{G^r}, $G^{(r)}\leq S$ and
$G$ is soluble. Let $G$ be
locally soluble and suppose that it is not soluble. By Lemma
\ref{Sm} with $\mathfrak{X}$ the class of soluble groups, we have
$G^{(s)}=G^{(s+1)}$ for some $s\geq0$. Moreover, $G^{(s)}$ is not
soluble. It follows, as in the proof of Lemma \ref{SF}, that every
proper subgroup of $G^{(s)}$ is soluble of length at most $d$.
Thus $G^{(s)}$ is finite by \cite[Lemma 2.1]{DE}, a contradiction.
\end{proof}

By a theorem of J.\,E. Roseblade (see \cite{Ros} or
\cite[12.2.3]{LR}), a group in which every subgroup is subnormal
of defect at most $n\geq 1$ is nilpotent of class not exceeding a
function depending only on $n$. Using this, we can
 generalize Lemma \ref{SF} to the locally graded case, provided that the subnormal defect
 is bounded.

\begin{lem}\label{n}
Let $G$ be a locally graded group and suppose that, for some positive integer $n$,
every non-soluble subgroup of $G$ is
subnormal of defect at most $n$. Then $G$ is locally (soluble-by-finite).
\end{lem}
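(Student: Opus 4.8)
The plan is to reduce to the known results by showing that the ``non-soluble'' part of $G$ is finite and simple-ish, and that everything else is controlled by solubility. First I would recall that, since every non-soluble subgroup of $G$ is subnormal of defect at most $n$, the same hypothesis is inherited by every subgroup and every quotient of $G$; so it suffices to prove that every finitely generated subgroup of $G$ is soluble-by-finite, and hence we may assume $G$ is finitely generated. Being locally graded and finitely generated, $G$ has a proper subgroup $N$ of finite index with $G/N$ non-trivial finite — but more usefully, I would instead argue directly with the derived series. Consider $K = G^{(r)}$ where $r = r(n)$ is the Roseblade-type bound coming from Remark \ref{rdependn}: if $K$ is soluble we are done, so assume not. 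Then by Lemma \ref{G^r} (in the bounded-defect form of Remark \ref{rdependn}) no proper subgroup $H$ of $K$ can fail to be soluble, for otherwise every subgroup containing $H$ would be subnormal of defect at most $n$ in $G$, forcing $G^{(r)} \le H < K = G^{(r)}$, a contradiction. Hence every proper subgroup of $K$ is soluble.

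Next I would pin down $K$. Since $G/K^{?}$... more precisely, replacing $G$ by $K$ and noting $K$ inherits the hypothesis, we may assume $G = K$ is a finitely generated locally graded group, not soluble, all of whose proper subgroups are soluble. By the Dixon–Evans–Smith line of results cited in the introduction (\cite{FdGN}, \cite{DES}), such a group — if infinite — is hyperabelian and locally soluble; but a finitely generated hyperabelian group has a proper subgroup of finite index, which would then have to be soluble, and solubility of a finite-index subgroup together with local solubility would force $G$ soluble (a finitely generated, locally soluble, abelian-by-(finite soluble) ... ) — here one uses that $G$ is finitely generated so the hyperabelian series reaches $G$ in a way that, combined with the finite-index soluble subgroup, gives solubility. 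So the infinite case collapses, and $K$ must be finite. A finite, non-soluble group all of whose proper subgroups are soluble is, by definition, a minimal simple group, hence is one of Thompson's list by Theorem \ref{Th}; in particular $K$ is finite. Then $G/C_G(K)$ embeds in $\mathrm{Aut}(K)$, which is finite, and $C_G(K)$ meets $K$ trivially so $C_G(K)$ embeds in $G/K$, which is soluble by Möhres' theorem since $G/K$ has all subgroups subnormal. Therefore $G$ is (soluble)-by-(finite), hence certainly locally (soluble-by-finite).

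Assembling this: for an arbitrary locally graded $G$ satisfying the hypothesis, apply the above to each finitely generated subgroup $F$ to conclude $F$ is soluble-by-finite; since this holds for all finitely generated $F$, the group $G$ is locally (soluble-by-finite) by definition, which is the claim.

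The main obstacle I expect is the step that eliminates the infinite case for the finitely generated ``all proper subgroups soluble'' group $K$ — i.e., checking carefully that a finitely generated locally graded group which is hyperabelian (or locally soluble) and not soluble cannot have all proper subgroups soluble. One has to make sure the finite-index proper subgroup supplied by local gradedness, being soluble of possibly unbounded derived length, still forces global solubility; this is where one leans on the structure theory (hyperabelian + finitely generated $\Rightarrow$ the ascending abelian series can be refined to reach $G$) rather than on any derived-length bound, since no bound on $d$ is assumed in this lemma. A secondary point requiring care is the passage between $G$ and $K = G^{(r)}$: one must verify that $r$ genuinely depends only on $n$ (this is exactly the content of Remark \ref{rdependn}, via \cite[12.2.8]{LR}), so that the reduction is uniform and the final conclusion ``locally (soluble-by-finite)'' is legitimate.
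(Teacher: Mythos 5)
Your reduction to finitely generated $G$, and your identification of $K=G^{(r(n))}$ as a subgroup contained in every non-soluble subgroup of $G$ (via Remark \ref{rdependn}), whence every proper subgroup of $K$ is soluble, are correct and in fact reach this intermediate configuration more directly than the paper, which works instead with the finite residual and Roseblade's theorem applied to finite quotients. The finite case is also essentially fine, modulo two small slips: a finite non-soluble group all of whose proper subgroups are soluble need not be simple (consider $SL(2,5)$), and $C_G(K)\cap K$ equals $Z(K)$, which need not be trivial. Neither slip matters, since $Z(K)$ is soluble, $G/K$ is soluble, and $G/C_G(K)$ is finite, so $G$ is still soluble-by-finite in that case.

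The genuine gap is in the infinite case. You write that ``we may assume $G=K$ is a finitely generated locally graded group \dots all of whose proper subgroups are soluble'', but $K=G^{(r)}$ is a term of the derived series of a finitely generated group and there is no reason for it to be finitely generated. Without finite generation of $K$, the implication ``$K$ infinite, locally graded, all proper subgroups soluble $\Rightarrow K$ soluble'' is precisely the open question recalled in the Introduction; the results of \cite{DES} and \cite{FdGN} give only local solubility and hyperabelianity of $K$, which do not produce a contradiction. The way out --- and the step missing from your proposal --- is to transfer the problem back to the finitely generated group $G$: by Lemma \ref{hyp}$(i)$, applied with $N=K$ (a minimal non-soluble normal subgroup, since $K$ is characteristic and all its proper subgroups are soluble), the group $G$ itself is hyperabelian; being finitely generated and non-nilpotent, it then has a finite non-nilpotent image $G/M$ by \cite[Theorem 10.51]{Rob2}. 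If $M$ is soluble, $G$ is soluble-by-finite and we are done; otherwise every subgroup of $G/M$ is the image of a non-soluble subgroup of $G$, hence subnormal of defect at most $n$, and Roseblade's theorem \cite[12.2.3]{LR} forces $G/M$ to be nilpotent --- a contradiction. This application of Roseblade's theorem to a finite non-nilpotent quotient is the key closing idea, and your proposal contains neither it nor any substitute for it.
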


\begin{proof}
We may assume that $G$ is finitely generated. Suppose that it is not soluble-by-finite and
denote by $R$ its finite residual. As $G$ is locally graded, $R$ is a proper subgroup of $G$. Let $N$ be a normal
 subgroup of $G$ with finite index. Then every subgroup of $G/N$ is subnormal of defect $\leq n$ and
 so, by Roseblade's theorem \cite[12.2.3]{LR}, $G/N$ is nilpotent of bounded class depending on $n$.
  It follows that $G/R$ is nilpotent and $R$ is not soluble. Let $S$ be a proper subgroup of $R$
  and suppose that $S$ is not soluble. Then every subgroup of $R/S^R$ is subnormal of defect $\leq n$,
  in particular $R/S^R$ is soluble, by \cite[12.2.3]{LR}. This implies that $R'<R$. So $G/R'$ is
  finitely generated
  and abelian-by-nilpotent. We get that $G/R'$ is residually finite (see, for instance,
  \cite[Theorem 9.51]{Rob2})
  and $R'=R$, a contradiction. Hence every proper subgroup of $R$ is soluble and $R$ cannot be
   finite: otherwise,
   $G$ would be finite-by-nilpotent and, consequently, also nilpotent-by-finite. By Lemma \ref{hyp},
   we obtain that
   $G$ is hyperabelian. Then $G$ has a finite non-nilpotent image $G/M$ (see, for instance,
    \cite[Theorem 10.51]{Rob2}). If $M$ is soluble then $G$
is soluble-by-finite, so $M$ is not soluble and every subgroup of
$G/M$ is subnormal of defect $\leq n$. Thus $G/M$ is nilpotent by
    \cite[12.2.3]{LR}, a contradiction.
\end{proof}

\begin{proof}[{\bf Proof of Theorem \ref{dn}}]
By Lemma \ref{n}, jointly with Theorem \ref{d}, we have that $G$ is either soluble, or $G^{(r)}$ is finite
 for some $r\geq 0$ and $G$ is an extension of a soluble group of derived length at most $d$ by a finite
  almost minimal simple group. Let $G$ be soluble and denote by $e$ its derived length. We may
  assume $d<e$. Then $H=G^{(e-(d+1))}$ is soluble of length $d+1$ and every subgroup of $G$
  containing $H$ is subnormal of defect $\leq n$. It follows that $G^{(s)}\leq H$ for some
  $s$ depending on $n$ (see Remark \ref{rdependn}). Thus, $G$ is soluble of length at most $s+d+1$.
  Suppose now that there exists $r\geq 0$ such that $K=G^{(r)}$ is finite and non-soluble.
   Since every subgroup of $G$ containing $K$ is subnormal of defect $\leq n$, we get,
   as before, $G^{(t)}\leq K$ for some $t$ depending on $n$.
\end{proof}

As a final remark we point out that, in $(ii)$ of Theorems \ref{d} and \ref{dn},
one cannot expect that $G$ is an extension of a soluble group $S$ by a finite minimal
simple group: it suffices to consider the direct product of any abelian group by the
symmetric group of degree 5. However, if $M/S$ is a finite minimal simple subgroup of
 $G/S$ such that $G/S\lesssim Aut\,(M/S)$, then $M\lhd G$ by Lemma \ref{G^r} and we can
 compute the order of $G/M$. In fact, $G/M\lesssim Out\,(M/S)$ where $|Out\,(M/S)|$
 divides $2p$, with $p$ odd prime, by Table \ref{Table}.

\vspace{1cm}

\noindent{\bf Acknowledgements.} The authors would like to thank Prof. Howard Smith
for interesting discussions and useful suggestions.

\end{document}